\documentclass[a4paper,11pt]{elsarticle}
\usepackage{amsmath,amsthm}
\usepackage{amscd}
\usepackage{amsfonts}
\usepackage{amssymb}
\usepackage{bm}                                                %
\usepackage{float}
\usepackage{graphicx,color}
\usepackage{latexsym}
\usepackage{lscape}
\usepackage{mathrsfs}
\usepackage{verbatim}
\usepackage{subcaption}
\usepackage[width=7in, height=9in]{geometry}

\newtheorem{theorem}{Theorem}[section]
\newtheorem{lemma}{Lemma}[section]
\newtheorem{remark}{Remark}[section]

\numberwithin{equation}{section}
\numberwithin{theorem}{section}
\numberwithin{remark}{section}

\begin{document}

\title{A novel Chebyshev collocation method for elliptic -type differential equations with degenerate coefficient\tnoteref{thank1}} \tnotetext[thank1]{The authors were partly supported by National Natural Science Foundation of China (12271233), Improving innovation ability of enterprises in Shandong province (2023TSGC0466).}

\author{Enze Yuan}\ead{1649772431@qq.com}

\author{Huantian Xie}\ead{xiehuantian@lyu.edu.cn}

\author{Ziwu Jiang}\ead{zwjiang@lyu.edu.cn}

\author{Jianwei Zhou\corref{cor}}\ead{jwzhou@yahoo.com}

\cortext[cor]{Corresponding author}

\address{School of Mathematics and Statistics, Linyi University, Linyi 276005, P.R. China}


\begin{abstract}
	A novel collocation scheme is presented for elliptic-type differential equations with degenerate coefficients and homogeneous Dirichlet boundary conditions. The use of weighted orthogonal Chebyshev polynomials for the basis functions leads to stiffness matrices with sparse structure, enabling efficient direct calculations. By an orthogonal projection, rigorous analyses are devoted to deriving a-priori error estimates of spectral accuracy in two norms. Furthermore, ample numerical experiments are conducted and compared with error data, convergence rates, condition numbers and $N$-$\log$ curves to confirm the theoretical analyses results. Our proposed method achieves spectral accuracy and handles boundary singularities efficiently, as demonstrated by theoretical analyses and numerical experiments.
\end{abstract}

\begin{keyword}
Chebyshev polynomial \sep degenerate coefficient \sep elliptic-type differential equation \sep  collocation method \sep a-priori error estimate.
\end{keyword}

\date{}
\maketitle
\vspace{1em}
\section{Introduction}

Degenerate differential equations, characterized by coefficients that vanish or become singular at the boundaries of the domain, constitute a class of problems that arise frequently in applied mathematics and mathematical physics. Unlike standard elliptic equations with strictly positive coefficients, these problems pose significant analytical and numerical challenges due to the loss of ellipticity and the presence of boundary singularities.
The specific model considered in this work, involving the degradation coefficient on an interval, is not merely a mathematical construct but a canonical form arising from diverse engineering and physical applications.

There are lots of representative engineering models that can be described by typical partial differential equations with degradation coefficients.
A classic example from solid mechanics involves the axial deformation of a nonuniform elastic rod. For tapered rods, idealized models are often formulated with a leading coefficient vanishes at the boundaries. This configuration leads to a second-order degenerate problem, commonly referred to as an endpoint-degenerate system. Further details can be found in \cite{Timoshenko1955}.
Meanwhile, in thermal engineering, efficient heat dissipation often relies on fins with variable profiles, such as triangular or parabolic spines  \cite{Bergman2011}. For these geometries, the cross-sectional area vanishes at the tip, causing the leading coefficient to degenerate. This creates an endpoint-degenerate boundary value problem, where standard numerical methods may lose accuracy near the singularity.

A further application emerges in fluid mechanics and tribology, particularly in modeling thin lubricant films using the Reynolds equation. In the context of elastohydrodynamic lubrication or gas-lubricated bearings in Micro-Electro-Mechanical systems, the film thickness frequently tends to zero near contact lines or sealing boundaries \cite{Hamrock2004}. Because the effective diffusion coefficient in the Reynolds equation scales with the cube of the film thickness, the governing elliptic equation becomes degenerate in these vanishing gap regions \cite{Bayada1986}. This mathematical degeneracy captures the physical phenomenon of flow stagnation and necessitates specialized numerical methods to handle the associated pressure singularities.
Complex transport phenomena in porous media represent other important source of degenerate models, notably in multiphase flow. The movement of immiscible fluids (e.g., oil and water) is described by generalized Darcy's laws, where phase mobility depends on the relative permeability. This permeability is a nonlinear function of saturation and goes to zero as the phase saturation approaches its residual limit \cite{Chen2006,Girault2021}. Consequently, the resulting governing equations for the saturation field form a degenerate parabolic system.
The authors considered a Dirichlet problem for a class of elliptic and parabolic equations, which are equipped by degenerate coefficients near the boundary. Meanwhile, they designed proper weights for the existence and uniqueness in \cite{DP2021}.
A linear degenerate elliptic model consisting of two first-order equations arising in partially melted materials, such as those in the earth's mantle, polar ice sheets, and glaciers, was presented in \cite{AT2016}. Additionally, they derived a mixed system for which the existence and uniqueness of a solution over the entire domain can be established, irrespective of degeneracy.

There is a substantial literature on numerical approximations for variable-coefficient and degenerate elliptic equations, and their analysis is often naturally posed in weighted Sobolev spaces. More details please refer to \cite{ABH2007,FKS1982} and the references cited therein.
From a computational mathematics perspective, degeneracy often arises from the choice of coordinate systems. When solving partial differential equations in $d$-dimensional polar, cylindrical, or spherical coordinates, radial operators (e.g., Laplace-Beltrami operator $\Delta = \partial_r^2 + \frac{d-1}{r}\partial_r$) exhibit apparent singularities at the origin $r=0$. These coordinate-induced singularities can be treated rigorously by reformulating the problem in weighted Sobolev spaces, as shown in \cite{STW2011}. Using generalized Jacobi polynomials or parity-preserving Fourier-Chebyshev expansions, the apparent singularity is absorbed into a weight function (e.g., $\omega(r)=r^{d-1}$), thereby transforming the originally singular problem into a well-posed degenerate variational formulation. 
Maintaining stability and accuracy near degenerate regions necessitates special treatments in classical finite difference and finite volume schemes; these include careful construction of numerical fluxes (especially at boundaries), rescaling of unknowns, and discretizations that preserve the degenerate character of the problem \cite{AT2017,EGH2000}.

Finite element methods can be formulated in weighted Sobolev spaces and provide flexible discretizations, but reduced regularity induced by degeneracy or singularity may lead to loss of optimal rates on quasi-uniform meshes unless graded or anisotropic meshes are employed \cite{ABH2007,Apel1999,NOS2015}.
Spectral and pseudospectral methods are renowned for their high-order (often exponential) accuracy when applied to generally partial differential equations with smooth solutions. These methods have been extensively developed with orthogonal polynomials such as Chebyshev, Legendre, and Jacobi bases, which provide a natural framework for achieving optimal convergence rates 
\cite{Boyd2001,CHQZ2006,STW2011}.
A combined numerical method for degenerate boundary problems are studied in \cite{HLFC2023}. And the rigorous analyses shown that the proposed approximation achieves satisfactory accuracy. 
The authors employed finite difference method with the Crank-Nicolson scheme to solve the first time inverse problems for a weakly degenerate heat equation, which vanishes at the initial moment of time in \cite{HL2020}.

Particularly, in view of the current literature, the a priori error estimates are investigated for different typical numerical approximations with degenerate cases. 
The authors in \cite{YZ2002} presented a priori error estimates for the one-dimensional compressible Navier-Stokes equations governing isentropic flow in Eulerian coordinates, specifically accounting for the degeneracy of the viscosity coefficient.
Global weighted $L^{p}$-estimates were established for the gradient of solutions to a class of linear singular, degenerate elliptic Dirichlet boundary value problems over a bounded non-smooth domain in \cite{CMP2018}. Particularly, they characterized corresponding smallness conditions for the degenerate coefficients.
Rigorous a priori error analysis for degenerate differential equations has been established within the framework of weighted Sobolev spaces \cite{Bernardi1997,Guo1998}. These works demonstrate that Chebyshev collocation methods retain their exponential convergence, even in the presence of boundary singularities, provided the solution belongs to the appropriate weighted classes. Complementary results \cite{CHQZ2006,STW2011} further confirm these estimates for models with endpoint-degenerate coefficients similar to those studied here.

For degenerate coefficients, however, it is nontrivial to design a basis that simultaneously (i) enforces boundary conditions, (ii) matches the intrinsic weighted structure, and (iii) leads to linear systems amenable to rigorous stability and approximation analysis as well as efficient solution. For example, \cite{STW2011,Shen1994} for the design of efficient Galerkin bases and solvers.

Motivated by these considerations, we develop a Chebyshev-based collocation scheme tailored to elliptic equations with endpoint-degenerate coefficients on $I:=(-1,1)$ (with endpoints $\partial I=\{-1,1\}$). 
The method is built on boundary-adapted trial functions derived from first-kind Chebyshev polynomials, so that the homogeneous Dirichlet boundary conditions are imposed exactly at the approximation level.
A key analytical ingredient is the weighted Sturm--Liouville identity, which shows that the first-kind Chebyshev polynomials satisfy an eigen-relation for the endpoint-degenerate operator. 
Combined with the orthogonality of Chebyshev polynomials in the weighted space, 
this identity allows the discrete bilinear form to be assembled in a structured manner, thereby yielding an explicitly sparse banded stiffness matrix and enabling an efficient implementation.
Unlike finite element methods that require graded meshes near singularities, our Chebyshev collocation scheme achieves high-order accuracy without mesh adaptation.

The rest of this paper is organized as follows. We begin in Section 2 by introducing the model problem with Dirichlet boundary conditions, establishing solution uniqueness via a rigorous analysis of coercivity and continuity, and developing the collocation framework with novel basis functions. In Section 3, we perform a rigorous error analysis, employing an orthogonal projection to obtain a-priori error estimates in two norms that prove spectral convergence. In the last Section, comprehensive numerical validations of the theoretical results are performed through detailed error and convergence rate studies.

\section{Model problem and its approximations}

In this section, we consider an elliptic-type problem with degenerate coefficients to demonstrate the advantages of collocation schemes based on orthogonal polynomials. The corresponding discretization system, formulated using Chebyshev polynomials, is designed to approximate the model problem. The well-posedness of the weak solution to the equivalent system is analyzed in detail.

\subsection{Model equations with variable coefficient}\label{sub-2.1} 

We focus on an elliptic-type differential equation with degenerate coefficients as follows:
\begin{equation}\label{eq:1d}
\left\{ \begin{aligned}
		-\bigl(\omega(x) \, u'(x)\bigr)' & = f(x), \quad x \in I, \\[4pt]
		u(x)|_{\partial I} & = 0,
  \end{aligned}
\right.
\end{equation}
where $\omega(x)=\sqrt{1-x^2}$ degenerates (vanishes) at the boundary of $I$ and $\int_I\frac{1}{\omega(x)}{\mathrm d}x <+\infty$. Generally, the homogeneous Dirichlet boundary conditions represent fixed endpoints in mechanical systems.

To investigate the weak solution of \eqref{eq:1d}, we must analyze this problem in a suitable Sobolev space. Obviously, the natural function space is a subspace of the weighted Sobolev space \( H_w^1(\Omega) \).
However, to guarantee the homogeneous Dirichlet boundary conditions, we define a weighted Sobolev space as the closure of $C_0^\infty(\Omega)$:
 \[
  H_{0,\omega}^1(I)= \left\{ u \in L^2(I) \;\middle|\; \int_I \omega(x) |u'(x)|^2 \, {\mathrm d}x < \infty, \ u|_{\partial I}=0 \right\}.
  \]
 
\begin{remark}
  Since the weight function \( w(x) \) is bounded and nonnegative on $I$, the \(L^2(I)\) norm is essentially unaffected by weight, or is equivalent to its weighted counterpart. For convenience, we therefore consider the space of functions \( u \in L^2(I) \) satisfying \( \sqrt{w} u' \in L^2(I) \).
\end{remark}

The function spaces are shortly denoted by
$U=L^2(I)$ and $V = H_{0,\omega}^1(I)$. And $V$ is equipped with the following full norm:
\[
 \|u\|_{H_{0,\omega}^1(I)} = \left( \int_I \left( |u|^2 + w(x)|u'|^2 \right) \, {\mathrm d}x \right)^{1/2}
\]

Indeed, the norm of $V$ is taken to be the weighted semi-norm, which is equivalent to the full norm in the sense that:
\begin{equation*}
	\|u\|_V = \left( \int_{I} \omega(x) |u'(x)|^2 \, {\mathrm d}x \right)^{1/2}.
\end{equation*}

We denote an $L^2$ inner product and a weighted $H_1$ semi-norm in $U$ and $V$, respectively, by
\[
(v, w)=\int_I v w, \quad \forall v,w\in U.
\]
and
\[
a(u, v) = \int_I \omega(x) u'(x) v'(x) {\mathrm d}x, \quad \forall v,w\in V.
\]

To investigate the equivalent weak formula of \eqref{eq:1d}, multiplying the equation \eqref{eq:1d} by any test function $v \in V$ and integrating over $I$:
\[
\int_I (\omega(x) u')' v \, {\mathrm d}x = \int_I f v \, {\mathrm d}x.
\]
Integrating by parts and using the boundary condition $v(\pm 1)=0$:
\begin{equation}\label{eq:1d-int}
\left( \omega(x) u' v \right)|_{\partial I} - \int_I \omega(x) u' v' \, {\mathrm d}x = \int_I f v \, {\mathrm d}x.
\end{equation}
The boundary term, which is listed in the first item, vanishes naturally. 

Then by \eqref{eq:1d-int}, we get the equivalent weak formulation of \eqref{eq:1d} reads: Finding $u \in V$ such that
\begin{equation}\label{eq:pde-weak}
	a(u, v) = (f, v), \quad \forall v \in V.
\end{equation}

\subsection{Coerciveness and uniqueness}

We proceed to establish the existence and uniqueness of solutions to the weak problem \eqref{eq:pde-weak}. According to the Lax-Milgram theorem, it suffices to show that the bilinear form $a(u, v)$ is bounded and coercive, and that the linear functional $(f, v)$ is bounded. Throughout, the symbols $c$ and $C$ denote various positive constants independent of the discretization parameters $N$ and $I$. A key step in analyzing the norm on $V$, which involves the $L^2(I)$ norm of $u$, is to establish a suitable weighted Poincar\'{e} inequality.

\begin{lemma}
	For any $u \in V$, there exists a constant $C > 0$ such that 
	\begin{equation}\label{poin}
		\|u\|_{U} \le C \|u\|_V .
	\end{equation}
\end{lemma}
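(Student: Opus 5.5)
We argue first for smooth functions and then pass to all of $V$ by density. Since $V$ is defined as the closure of $C_0^\infty(I)$, it suffices to prove \eqref{poin} for $u\in C_0^\infty(I)$ with a constant $C$ independent of $u$. Both sides of \eqref{poin} are continuous with respect to the norm of $V$, so the inequality then extends to every $u\in V$.

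The core estimate uses the fundamental theorem of calculus together with the integrability of $1/\omega$. For $u\in C_0^\infty(I)$ and $x\in I$ we write $u(x)=\int_{-1}^x u'(t)\,{\mathrm d}t$. We split the integrand as $u'=\omega^{-1/2}\cdot\omega^{1/2}u'$ and apply the Cauchy--Schwarz inequality:
\[
|u(x)|^2\le \Bigl(\int_{-1}^x \frac{{\mathrm d}t}{\omega(t)}\Bigr)\Bigl(\int_{-1}^x \omega(t)|u'(t)|^2\,{\mathrm d}t\Bigr)\le \Bigl(\int_I\frac{{\mathrm d}t}{\omega(t)}\Bigr)\|u\|_V^2 .
\]
For $\omega(x)=\sqrt{1-x^2}$ the first factor is finite, since $\int_I (1-t^2)^{-1/2}\,{\mathrm d}t=\pi$. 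This is exactly the hypothesis $\int_I \omega^{-1}<\infty$ recorded after \eqref{eq:1d}. We therefore obtain the uniform bound $\|u\|_{L^\infty(I)}^2\le \pi\|u\|_V^2$. Integrating over $I$, whose length is $2$, gives $\|u\|_U^2\le 2\pi\|u\|_V^2$, so \eqref{poin} holds with $C=\sqrt{2\pi}$.

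As an alternative, one could expand $u$ in Chebyshev polynomials and use the Sturm--Liouville eigen-relation to get a spectral-gap bound. The direct argument above is simpler and avoids any regularity issues, so I would use it.

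The only delicate point is the density step, that is, confirming that the pointwise representation passes to general $u\in V$. Take a sequence $u_n\in C_0^\infty(I)$ that converges to $u$ in the $V$-norm. The bound $\|u_n-u_m\|_{L^\infty}\le\sqrt{\pi}\,\|u_n-u_m\|_V$ shows that $(u_n)$ is Cauchy in $C(\bar I)$. Its uniform limit therefore coincides almost everywhere with the $L^2$ limit $u$. Passing to the limit in $\|u_n\|_U\le C\|u_n\|_V$ then concludes the proof. As a by-product, this argument also justifies that the trace condition $u|_{\partial I}=0$ in the definition of $V$ is meaningful.
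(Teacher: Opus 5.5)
Your proof is correct and follows the same route as the paper. Both write $u(x)=\int_{-1}^x u'(t)\,\mathrm{d}t$, split $u'=\omega^{-1/2}\cdot\omega^{1/2}u'$, apply Cauchy--Schwarz with $\int_I \omega^{-1}\,\mathrm{d}t=\pi<\infty$ to get a uniform pointwise bound, and integrate over $I$. Your additions are the explicit constant $C=\sqrt{2\pi}$ and the density step, which is sound because the closure is taken in the full norm, so $u_n\to u$ in $L^2$; these justify the pointwise representation that the paper simply assumes for general $u\in V$.
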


\begin{proof}
	Since $u(-1)=0$, we can write $u(x) = \int_{-1}^x u'(t) \, {\mathrm d}t$. By the Cauchy-Schwarz inequality:
	\begin{eqnarray}
		\begin{aligned}
			|u(x)|^2 
			& \leq \left| \int_{-1}^x u'(t) \, {\mathrm d}t \right|^2
			= \left| \int_{-1}^x \omega(t)^{-1/2} \omega(t)^{1/2} u'(t) \, {\mathrm d}t \right|^2 \\
			& \leq  \left( \int_I \frac{{\mathrm d}t}{\omega(t)} \right) \left( \int_I \omega(t) |u'(t)|^2 \, {\mathrm d}t \right).
		\end{aligned}
	\end{eqnarray}
The first integral is finite. Thus, $|u(x)|^2 \leq c \|u\|_V^2$, which implies $\|u\|_U$ is bounded by $\|u\|_V$.

\end{proof}

In view of \eqref{poin}, $a(u, u) = \|u\|_V^2 \geq C \|u\|_{H^1_{0,w}}^2$ for some $C$, proving coercivity. To study boundedness of $a(u, v)$, we recall the Cauchy-Schwarz inequality:
\begin{eqnarray*}
	\begin{aligned}
	|a(u, v)| \leq \int_I |\omega(x) u'(x) v'(x)| {\mathrm d}x \le \left(\int_I \omega(x) |u'|^2 {\mathrm d}x \right)^{1/2} \left(\int_I \omega(x) |v'|^2 {\mathrm d}x \right)^{1/2} = \|u\|_V \|v\|_V.
	\end{aligned}
\end{eqnarray*}
Thus, the bilinear form is bounded (continuous). Similarly, for $f \in U$ and any $v\in V$, $(f, v)$ is bounded on $V$, obviously.

Since $V$ is a Hilbert space, the bilinear form $a(\cdot, \cdot)$ is bounded and coercive, by the Lax-Milgram Theorem, one directly declares that there exists a unique weak solution $u \in V$ to the problem \eqref{eq:pde-weak}.

%
%

\subsection{Collocation discretization and preliminaries }\label{sub-2.3}

We begin by introducing some basic notation. Let \(T_{n}(x)\) denote the Chebyshev polynomials of the first kind. To construct basis functions that ensure the homogeneous Dirichlet boundary conditions in \eqref{eq:1d}, we define
\begin{equation}\label{eq:basis}
  \phi_{n}(x) = T_{n + 2}(x) - T_{n}(x), \qquad n = 0, 1, \ldots, N - 2.
\end{equation}

Note that \(T_{k}(1) = 1\) and \(T_{k}(-1) = (-1)^{k}\), hence

\[
\phi_{n}(1) = T_{n+2}(1) - T_{n}(1) = 0, \quad \phi_{n}(-1) = T_{n+2}(-1) - T_{n}(-1) = 0.
\]

Let

\[
V_{N} = \operatorname{span}\{\phi_{0}, \phi_{1}, \ldots, \phi_{N-2}\}.
\]

Take the approximate solution

\begin{equation}\label{eq:collo}
  u_{N}(x) = \sum_{n=0}^{N-2} a_{n} \phi_{n}(x).
\end{equation}

Then the corresponding discretized formula of \eqref{eq:1d} reads
\begin{equation}\label{eq:1d_distcr}
	\left\{ \begin{aligned}
		-\bigl(\omega(x) \, u_N'(x)\bigr)' & = f(x), \quad x \in I, \\[4pt]
		u_N(x)|_{\partial I} & = 0,
	\end{aligned}
	\right.
\end{equation}

Multiply both sides of \eqref{eq:1d_distcr} by a test function \(\phi_{m}(x)\) (\(m = 0, 1, \ldots, N-2\)) and integrate over \(I\), yielding

\[
\int_I \left( \sqrt{1 - x^{2}} \, u_{N}'(x) \right)' \phi_{m}(x) \, dx = \int_I f(x) \phi_{m}(x) \, dx, \quad m = 0, 1, \ldots, N-2.
\]

Substituting \eqref{eq:collo} and exchanging the order of summation and integration, we obtain

\[
\sum_{n=0}^{N-2} a_{n} \underbrace{\int_I \left( \sqrt{1 - x^{2}} \, \phi_{n}'(x) \right)' \phi_{m}(x) \, {\mathrm d}x}_{A_{nm}} = \int_I f(x) \phi_{m}(x) \, {\mathrm d}x, \quad m = 0, 1, \ldots, N-2.
\]

Therefore, the discrete system can be written in matrix form as
\[
\mathbf{A} \mathbf{a} = \mathbf{b},
\]
where ${\mathbf{A}}=(A_{nm})_{(N-1)\times(N-1)}$, $\mathbf{a} = (a_{0}, a_{1}, \ldots, a_{N-2})^{\mathrm T}$ and $\mathbf{b} = (b_{0}, b_{1}, \ldots, b_{N-2})^{\mathrm T}$.
Components of the right-hand side vector $\mathbf{b}$ are
\begin{equation}\label{eq:right}
b_{m} = \int_I f(x) \phi_{m}(x) \, {\mathrm d}x = \int_I f(x) \big( T_{m+2}(x) - T_{m}(x) \big) \, {\mathrm d}x.  
\end{equation}


Take in mind that, Chebyshev polynomials of the first kind satisfy
\begin{equation}\label{eq:cheb}
\left( \sqrt{1 - x^{2}} \, T_{k}'(x) \right)' = -\frac{k^{2}}{\sqrt{1 - x^{2}}} \, T_{k}(x), \qquad |x| < 1, \quad k \geq 0.
\end{equation}
Applying \eqref{eq:cheb} for \(k = n+2\) and \(k = n\), we get
\[
\left( \sqrt{1 - x^{2}} \, \phi_{n}'(x) \right)' = -\frac{(n+2)^{2}}{\sqrt{1 - x^{2}}} \, T_{n+2}(x) + \frac{n^{2}}{\sqrt{1 - x^{2}}} \, T_{n}(x).
\]
Since \(\phi_{m}(x) = T_{m+2}(x) - T_{m}(x)\), we have
\[
\begin{aligned}
	A_{nm} &= \int_I \left( \sqrt{1 - x^{2}} \, \phi_{n}'(x) \right)' \phi_{m}(x) \, {\mathrm d}x \\
	&= \int_i \frac{-(n+2)^{2} T_{n+2}(x) + n^{2} T_{n}(x)}{\sqrt{1 - x^{2}}} \big( T_{m+2}(x) - T_{m}(x) \big) \, dx \\
	&= -(n+2)^{2} \int_I \frac{T_{n+2}(x) T_{m+2}(x)}{\sqrt{1 - x^{2}}} \, dx + (n+2)^{2} \int_I \frac{T_{n+2}(x) T_{m}(x)}{\sqrt{1 - x^{2}}} \, dx \\
	&\quad\ + n^{2} \int_I \frac{T_{n}(x) T_{m+2}(x)}{\sqrt{1 - x^{2}}} \, dx - n^{2} \int_I \frac{T_{n}(x) T_{m}(x)}{\sqrt{1 - x^{2}}} \, dx.
\end{aligned}
\]

Chebyshev polynomials of the first kind are orthogonal with respect to the weight function \(w(x) = (1 - x^{2})^{-1/2}\):
\[
\int_I \frac{T_{n}(x) T_{m}(x)}{\sqrt{1 - x^{2}}} \, {\mathrm d}x = \left\{ 
\begin{array}{ll}
	0, & n \neq m, \\
	\pi, & n = m = 0, \\
	\frac{\pi}{2}, & n = m \geq 1.
\end{array} 
\right.
\]

Denote
\[
h_{n} = \int_I \frac{T_{n}(x)^{2}}{\sqrt{1 - x^{2}}} \, {\mathrm d}x = \left\{ 
\begin{array}{ll}
	\pi, & n = 0, \\
	\frac{\pi}{2}, & n \geq 1.
\end{array} 
\right.
\]
Thus the entries of \(A_{mn}\) are nonzero only when \(m = n\) or \(m = n \pm 2\), and can be given by explicit formulae as
\[
A_{mn} = \left\{ 
\begin{array}{ll}
	-(n+2)^2 h_{n+2} - n^2 h_n, & m = n \\
	\displaystyle (n+2)^2 h_{n+2}, & m = n + 2 \\
	\displaystyle n^2 h_n, & m = n - 2 \\
	0, & \text{otherwise}
\end{array} 
\right. \quad (m, n = 0, 1, \ldots, N-2) 
\]

To facilitate numerical computation, we introduce a variable substitution
\[
x = \cos \theta, \qquad \theta \in [0, \pi], \qquad {\mathrm d}x = -\sin \theta \, {\mathrm d}\theta,
\]
hence the original expression in \eqref{eq:right} can be equivalently written as
\[
b_{m} = \int_{0}^{\pi} f(\cos \theta) \big( \cos((m+2)\theta) - \cos(m\theta) \big) \sin \theta \, {\mathrm d}\theta,
\]
where the first kind Chebyshev polynomials
\[
T_{k}(\cos \theta) = \cos(k\theta).
\]

We use \(Q\)-point Gauss-Legendre quadrature on \([0, \pi]\) with a linear mapping to calculate 
\[
I_{m} = \int_I f(x) T_{m}(x) \, {\mathrm d}x = \int_{0}^{\pi} f(\cos \theta) \cos(m\theta) \sin \theta \, {\mathrm d}\theta. 
\]
Then the original expression within \eqref{eq:right} equals
\begin{equation}\label{eq:right-eq}
b_{m} = I_{m+2} - I_{m}, \quad m = 0, 1, \ldots, N-2.
\end{equation}
Compute using Gauss-Legendre quadrature, $i.e.$,
\[
I_{m} \approx I_{m}^{(Q)} = \sum_{j=1}^{Q} \omega_{j} f(\cos \theta_{j}) \cos(m\theta_{j}) \sin \theta_{j},
\]
then one readily obtains \(b_{m} \approx I_{m+2}^{(Q)} - I_{m}^{(Q)}\), which can be readily used within the numerical calculations.


\section{A Priori Error Estimate}

In order to investigate the error estimates for the proposed collocation schemes, a gradient orthogonal projector is introduced. For any $v\in V$, it is defined with respect to
 $\mathfrak{P}_{N}: V\mapsto \tilde{V}_N$ as:
\[
(\nabla (v - \mathfrak{P}_{N} v), \nabla w_{N})_{\omega} = 0, \quad \forall w_{N} \in \tilde{V}_{N},
\]
where
\[
\tilde{V}_N = \operatorname{span}\{\phi_{0}(x), \phi_{1}(x), \ldots, \phi_{N-2}(x), \phi_{N-1}(x), \phi_{N}(x)\}.
\]

Moreover, the projection operator \(\mathfrak{P}_{N}\) is the Ritz projection (elliptic projection) in \(V\), satisfying the following stability:
\[
\| \mathfrak{P}_{N} v \|_{1} \leq C \| v \|_{1}, \quad \forall v \in V.
\]

For a given \(F \in H^{-1}(I)\), define \(y_{F} \in V\) as the corresponding weak solution of the following problem:
\[
a( y_{F}, v) = \langle F, v \rangle, \quad \forall v \in V.
\]

Therefore,

\[
\| y_{F} - \mathfrak{P}_{N} y_{F} \|_{V} \leq \| y_{F} \|_{V} + \| \mathfrak{P}_{N} y_{F} \|_{V} \leq (1 + C) \| y_{F} \|_{V} \leq c \| F \|_{-1}.
\]

%

It is clear that 
\[
\left| a(u - \mathfrak{P}_{N} u, y_{F} - \mathfrak{P}_{N} y_{F}) \right| \leq \| u -\mathfrak{P}_{N} u \|_{V} \|  y_{F} - \mathfrak{P}_{N} y_{F}\|_{V}.
\]

%
%
%

Then, we obtain
\[
\sup_{\forall F \in H^{-1}(\Omega)} \frac{a(u - \mathfrak{P}_N u, y_{F} - \mathfrak{P}_N y_{F})}{\| F \|_{-1}} \leq \| u - \mathfrak{P}_N u \|_{V} \cdot \sup_{\forall F \in H^{-1}(\Omega)} \frac{\| y_{F} - \mathfrak{P}_N y_{F} \|_{V}}{\| F \|_{-1}}.
\]



Assuming the exact solution \(u \in H^{m}(I)\cap H_{0,\omega}^1(I)\), and using polynomial interpolation approximations, one gets that the projection operator error satisfies:

\[
\| u - \mathfrak{P}_N u \|_{1} \leq C N^{1-m} \| u \|_{m}.
\]

In particular, there holds:

\begin{equation}\label{u_est}
\| u - \mathfrak{P}_N u \|_{V} \leq c \| u - \mathfrak{P}_N u \|_{1} \leq C N^{1-m} \| u \|_{m}.
\end{equation}

Now we are at the point to discuss the error estimates of our proposed collocation approximations. 
\begin{theorem}
	Assume $u\in H^m(I)\cap H_{0,\omega}^1$	with $m\geq 2$.
	Let $u$ and $u_{N}$ be the solution of \eqref{eq:1d} and \eqref{eq:1d_distcr}, respectively. There holds the following a-priori error estimates: 
	\begin{equation}\label{a_pri_1}
	   \begin{aligned}
		\| u - u_{N} \|_{V} \leq C N^{1-m} \| u \|_{m},
    	\end{aligned}
	\end{equation}
	and
		\begin{equation}\label{a_pri_0}
		\begin{aligned}
			\| u - u_{N} \|_{U} \leq C N^{-m} \| u \|_{m}.
		\end{aligned}
	\end{equation}
\end{theorem}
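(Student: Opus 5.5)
The plan is to treat the scheme \eqref{eq:1d_distcr}, tested against $\phi_m$, as a Galerkin method in $V_N\subset V$. By \eqref{eq:1d-int}, the discrete problem is equivalent to finding $u_N\in V_N$ with
\begin{equation*}
a(u_N,v_N)=(f,v_N),\quad \forall v_N\in V_N .
\end{equation*}
For every $v_N\in V_N$, $\phi_m$ vanishes at $\pm1$ and $\omega u_N'$ is bounded, so the boundary term in the integration by parts drops out. Subtracting this from \eqref{eq:pde-weak} gives Galerkin orthogonality, $a(u-u_N,v_N)=0$ for all $v_N\in V_N$. Since $a(\cdot,\cdot)$ is exactly the square of the $V$-norm, this immediately yields the C\'ea-type best approximation property
\begin{equation*}
\|u-u_N\|_V=\inf_{v_N\in V_N}\|u-v_N\|_V .
\end{equation*}

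For \eqref{a_pri_1}, we bound this infimum by choosing a good element of $V_N$. The projection $\mathfrak{P}_N$ maps into $\tilde V_N$, which is one degree larger than $V_N$. The cleanest fix is to apply \eqref{u_est} with $N$ replaced by $N-2$, so that the projection lands in $V_N$. Equivalently, $u_N$ is itself the $a$-orthogonal (Ritz) projection onto $V_N$, and the approximation estimate used for \eqref{u_est} holds for it with $N^{1-m}$ replaced by $(N-2)^{1-m}\le C N^{1-m}$. Hence
\begin{equation*}
\|u-u_N\|_V\le \|u-\mathfrak{P}_{N-2}u\|_V\le C N^{1-m}\|u\|_m .
\end{equation*}

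For \eqref{a_pri_0}, we use the Aubin--Nitsche duality argument set up before the theorem. Take $F=u-u_N\in U\subset H^{-1}$ and let $y_F\in V$ solve $a(y_F,v)=(u-u_N,v)$. Then, using Galerkin orthogonality,
\begin{equation*}
\|u-u_N\|_U^2=a(u-u_N,y_F)=a(u-u_N,y_F-v_N)\le \|u-u_N\|_V\,\inf_{v_N\in V_N}\|y_F-v_N\|_V .
\end{equation*}
It remains to show that $\inf_{v_N}\|y_F-v_N\|_V\le C N^{-1}\|u-u_N\|_U$. This requires a regularity statement for the dual problem, namely $\|y_F\|_2\le C\|F\|_U$, after which \eqref{u_est} with $m=2$ gives the factor $N^{-1}$. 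Combining this with \eqref{a_pri_1} gives $\|u-u_N\|_U\le CN^{1-m}N^{-1}\|u\|_m=CN^{-m}\|u\|_m$.

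The main obstacle is this dual regularity. Because $\omega$ vanishes at $\pm1$, $y_F$ need not lie in the unweighted $H^2(I)$: near the endpoints it behaves like $\sqrt{1-x}$-type terms. I would first try to work in a weighted norm adapted to the operator. Expanding $F$ in the basis $T_k$ and using the eigen-relation \eqref{eq:cheb}, $y_F$ has Chebyshev coefficients scaled by $k^{-2}$ relative to those of $\omega F$. One can then estimate the tail $\sum_{k>N}k^2|\hat y_k|^2\lesssim N^{-2}\sum_k k^4|\hat y_k|^2\lesssim N^{-2}\|F\|^2$ directly in Chebyshev space, which bypasses the need for unweighted $H^2$ regularity. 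If the mismatch between the $L^2$ weight and the Chebyshev weight obstructs this, I would fall back to stating the duality bound under the assumption $\|y_F-\mathfrak{P}_Ny_F\|_V\le CN^{-1}\|F\|_U$, which is the sharpened form of the supremum estimate displayed before the theorem.
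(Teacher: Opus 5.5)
Your treatment of \eqref{a_pri_1} is sound and is in substance the paper's argument. The scheme is a Galerkin method and $u_N$ is the $a$-orthogonal projection of $u$ onto $V_N$, so C\'ea plus \eqref{u_est} gives the bound. Replacing $\mathfrak{P}_N$ by $\mathfrak{P}_{N-2}$, so that $\tilde V_{N-2}=V_N$, correctly repairs the index mismatch that the paper glosses over.

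\textbf{The gap is in \eqref{a_pri_0}.} The bound you need, $\inf_{v_N\in V_N}\|y_F-v_N\|_V\le CN^{-1}\|F\|_U$, is false, not merely hard to prove. So neither your Chebyshev-coefficient argument nor your fallback assumption can close the proof. Under $x=\cos\theta$ one has $a(T_j,T_k)=\frac{\pi}{2}k^2\delta_{jk}$. Hence $\|y-v_N\|_V^2\ge\frac{\pi}{2}\sum_{k>N}k^2|\hat y_k|^2$ for every $v_N\in V_N\subset\mathbb{P}_N$. For $F\equiv1$ the dual solution is $y_F=\sqrt{1-x^2}$, with $\hat y_k=-4/(\pi(k^2-1))$ for even $k\ge2$. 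The tail is therefore $\ge c/N$, and the infimum is $\ge cN^{-1/2}$.

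Your claim that $k^2\hat y_k$ are the Chebyshev coefficients of $\omega F$ drops boundary terms. Set $Y(\theta)=y_F(\cos\theta)$; the dual problem becomes $-Y''=F(\cos\theta)\sin\theta$ on $(0,\pi)$ with Dirichlet data. Integrating by parts against $\cos k\theta$ gives $k^2\hat y_k=\widehat{(\omega F)}_k+\frac{2}{\pi}\bigl((-1)^kY'(\pi)-Y'(0)\bigr)$. The last term does not decay, so $\sum_k k^4|\hat y_k|^2=\infty$ in general. The obstruction is this Dirichlet-versus-Neumann mismatch, not the weight; indeed $\|F(\cos\theta)\sin\theta\|_{L^2(0,\pi)}\le\|F\|_U$. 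Consequently Aubin--Nitsche yields only $\|u-u_N\|_U\le CN^{1/2-m}\|u\|_m$. The paper's proof follows the same duality route and fails at the same point. It silently uses $\|y_G\|_2\le C\|G\|$, yet $y_G$ is generically not even in $H^1(I)$.

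\textbf{The missing idea is to drop duality and use the diagonal structure directly.} Let $\Pi_N$ be Chebyshev truncation. Then $a(u-\Pi_Nu,p)=0$ for all $p\in\mathbb{P}_N$, so $u_N=\Pi_Nu+r$. Here $r\in\mathbb{P}_N$ is $a$-orthogonal to $V_N$, with $r(\pm1)=\epsilon_\pm:=u(\pm1)-\Pi_Nu(\pm1)$. Orthogonality forces $(k^2\hat r_k)_k\in\operatorname{span}\{1,(-1)^k\}$, which gives $r=\hat r_0+\beta\sum_{k\ \mathrm{odd},\,k\le N}T_k/k^2$. Hence $\|r\|_{L^\infty}\le|\epsilon_+|+|\epsilon_-|$, and
$\|u-u_N\|_U\le\|u-\Pi_Nu\|_U+\sqrt{2}\,(|\epsilon_+|+|\epsilon_-|)$.

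The first term is $O(N^{-m})\|u\|_m$. For the endpoint errors, write $\hat u_k=(b_{k-1}-b_{k+1})/(2k)$, where $b_k$ are the Chebyshev coefficients of $u'$, and sum by parts. This gives $|\epsilon_+|\le\frac{|b_N|+|b_{N+1}|}{2(N+1)}+\sum_{k\ge N+2}\frac{|b_k|}{k^2-1}$, and similarly for $\epsilon_-$. Both are $O(N^{-m})\|u\|_m$, since $\sum_k k^{2(m-1)}|b_k|^2\le C\|u\|_m^2$. This proves \eqref{a_pri_0}. The same splitting, via $\|u-u_N\|_V^2=\|u-\Pi_Nu\|_V^2+\|r\|_V^2$, also reproves \eqref{a_pri_1}.
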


\begin{proof}
We present the details as follows.
\[
\begin{aligned}
	\| u - u_{N} \|_{V} &= \sup_{\forall F \in H^{-1}(I)} \frac{|\langle F, u - u_{N} \rangle|}{\| F \|_{-1}} 
	= \sup_{\forall F \in H^{-1}(I)} \frac{a (u - u_{N}, y_{F})}{\| F \|_{-1}}\\
	&= \sup_{\forall F \in H^{-1}(I)} \frac{a(u - \mathfrak{P}_N u, y_{F} - \mathfrak{P}_N y_{F})}{\| F \|_{-1}} \\
	&\leq c \| u - \mathfrak{P}_N u \|_{V} \leq C N^{1-m} \| u \|_{m}.
\end{aligned}
\]
where we used the result stated in \eqref{u_est}.

In the light of Riesz theorem, we investigate the error estimates of collocation approximations with $L^2$-norm: 
\[
\begin{aligned}
	& \quad \| u - u_{N} \|_U = \sup_{\forall G \in L^{2}(\Omega)} \frac{|\langle G, u - u_{N} \rangle|}{\| G \|} = \sup_{\forall G \in L^{2}(\Omega)} \frac{|a(y_{G}, u - u_{N})|}{\| G \|} \\
	& = \sup_{\forall G \in L^{2}(\Omega)} \frac{|a(y_{G} - y_{G}^{N}, u - u_{N})|}{\| G \|} 
	\leq C \| u - u_{N} \|_{V} \sup_{\forall G \in L^{2}(\Omega)} \frac{\| \omega(x) \nabla (y_{G} - y_{G}^{N}) \|}{\| G \|} \\
	&\leq c \| u - u_{N} \|_{V} \sup_{\forall G \in L^{2}(\Omega)} \frac{\| \nabla (y_{G} - y_{G}^{N}) \|}{\| G \|} \leq C \| u - u_{N} \|_{V} \sup_{\forall G \in L^{2}(\Omega)}  \frac{\| y_{G} - \mathfrak{P}_N y_{G} \|_{1}}{\| G \|} \\
	&\leq cN^{-1} \| u - u_{N} \|_{V} \cdot \frac{\| y_{G} \|_{2}}{\| G \|} \leq C N^{-m} \| u \|_{m}. 
\end{aligned}
\]
This is the desired results.
\end{proof}

\section{Numerical Experiments}

In this section, we perform some numerical experiments to confirm
the analytical results given in above sections. We use our proposed schemes to calculate the model with some different given solutions, which depict the spectral convergence of numerical approximations.

\textbf{Example 1.} To assess the convergence of the proposed collocation schemes, we select an analytical solution that possesses limited regularity specifically, below first order:
\[
  u = -\sqrt{1 - x^{2}}.
\]
And the right hand item reads
\[
  f = 1.
\]

The table below presents, for different values of \(N\), the numerical data of the collocation method: the error between the numerical and exact solutions in both the \(L^{\infty}\)- and \(L^{2}\)-norms, the corresponding convergence order for the solution $u$, and the condition number of the stiffness matrix $A$. And the rate is defined by
\[
\mathrm{rate} = \frac{\log(E(N_2) / E(N_1))}{\log (N_1/N_2)}, \quad E(N) = \| u - u_{N} \|_{L^2}.
\]

\begin{table}[H]
	\centering
		\caption{Errors, convergence rates and condition numbers for different $N$.}
	\begin{tabular}{|c|c|c|c|c|c|c|}
		\hline
		\(N\) & \(\|u-u_{N}\|_{L^\infty}\) & \(\|u'-u'_{N}\|_{L^\infty}\) & \(\|u-u_{N}\|_{L^2}\) & \(\|u'-u'_{N}\|_{L^2}\) & rate & cond(\(A\)) \\
		\hline
		4 & 1.7357e-01 & 7.0711e+05 & 1.8518e-01 & 2.2946 & / & 6.8541e+00 \\
		8 & 9.5699e-02 & 7.0711e+05 & 1.0111e-01 & 2.0221 & 1.823e-01 & 4.8501e+01 \\
		12 & 6.6134e-02 & 7.0711e+05 & 6.9649e-02 & 1.8404 & 2.181e-01 & 1.4231e+02 \\
		16 & 5.0536e-02 & 7.0710e+05 & 5.3149e-02 & 1.7001 & 2.502e-01 & 2.9647e+02 \\
		24 & 3.4347e-02 & 7.0710e+05 & 3.6078e-02 & 1.5882 & 2.126e-01 & 8.0452e+02 \\
		\hline
	\end{tabular}
	\label{tab:example1}
\end{table}

Table \ref{tab:example1} summarizes the numerical results for various polynomial degrees $N$. A key observation is the behavior of the derivative errors. $\|u'-u'_N\|_{L^\infty}$ remains consistently large ($\approx 7.07 \times 10^5$) across all $N$. This is not a failure of the method but a direct consequence of the boundary singularity of the exact derivative, which becomes unbounded at the endpoints.

 \begin{figure}[H]
	 \centering
	 	\begin{subfigure}[t]{0.5\textwidth}
	 	\centering
	 	\includegraphics[width=\linewidth]{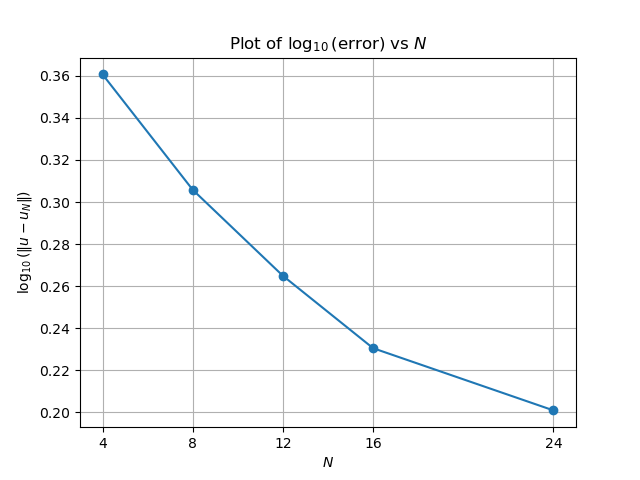}
	 	\caption{\(\log_{10}(\| u - u_{N}\|_{L^2})\) versus \(N\).}
	 \end{subfigure}\hfill
	 \begin{subfigure}[t]{0.5\textwidth}
	 \includegraphics[width=8.5cm, height=6.2cm]{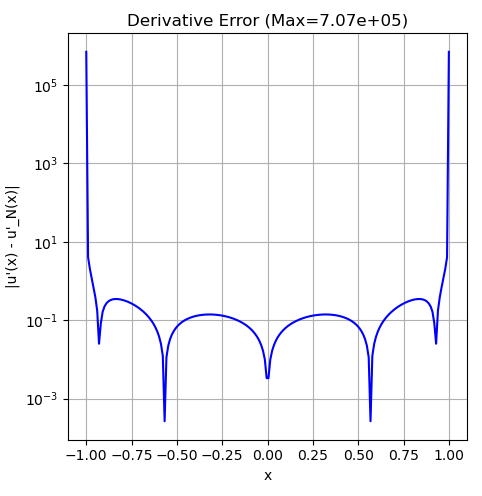}
	 \caption{\(|u'-u'_{N}|\) with \(N=4\).}
	\end{subfigure}
	\caption{Solution convergence profile and pointwise error for $N=4$.}
	\label{fig:example1_convergence}
	 \end{figure}
\begin{figure}[H]
	\centering
	
	\begin{subfigure}[t]{0.5\textwidth}
		\centering
		\includegraphics[width=\linewidth]{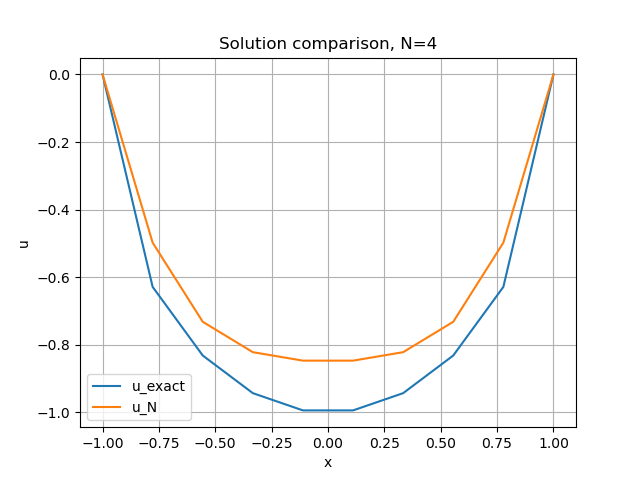}
		\caption{Sketches of $u$ and $u_N$}
	\end{subfigure}\hfill
	\begin{subfigure}[t]{0.5\textwidth}
		\centering
		\includegraphics[width=\linewidth]{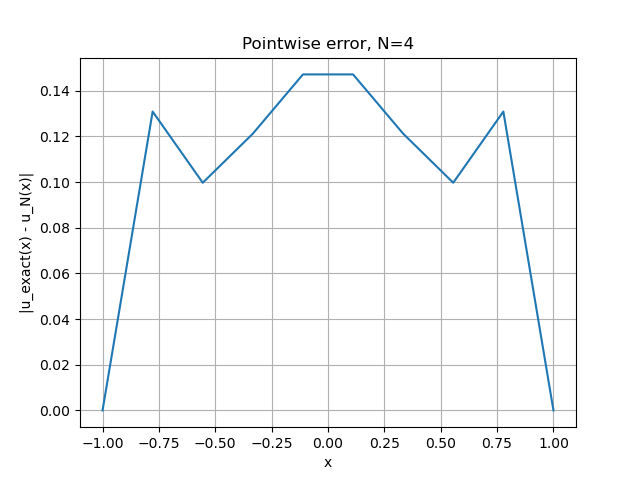}
		\caption{Pointwise error of $|u-u_N|$}
	\end{subfigure}
	\caption{Solution profile and pointwise error for $N=4$. (left: true solution $u$ versus numerical solution $u_N$. right: $|u-u_N|$)}
\label{fig:example1_sol_err1}
\end{figure}

\begin{figure}[H]
\centering
	
	\begin{subfigure}[t]{0.5\textwidth}
		\centering
		\includegraphics[width=\linewidth]{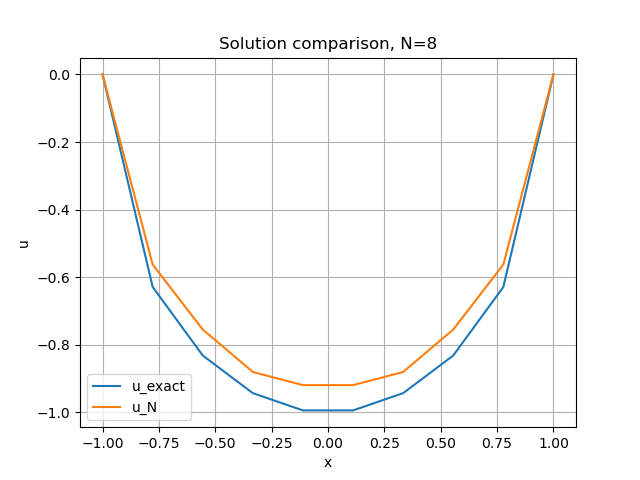}
		\caption{Sketches of $u$ and $u_N$ }
	\end{subfigure}\hfill
	\begin{subfigure}[t]{0.5\textwidth}
		\centering
		\includegraphics[width=\linewidth]{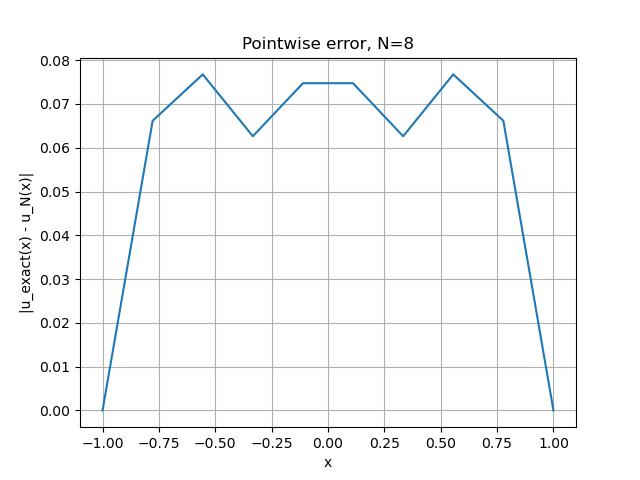}
		\caption{Pointwise error of $|u-u_N|$ }
	\end{subfigure}
	\caption{Solution profile and pointwise error for $N=8$. (left: true solution $u$ versus numerical solution $u_N$. right: $|u-u_N|$)}
\label{fig:example1_sol_err2}
\end{figure}

\begin{figure}[H]
\centering
	
	\begin{subfigure}[t]{0.5\textwidth}
		\centering
		\includegraphics[width=\linewidth]{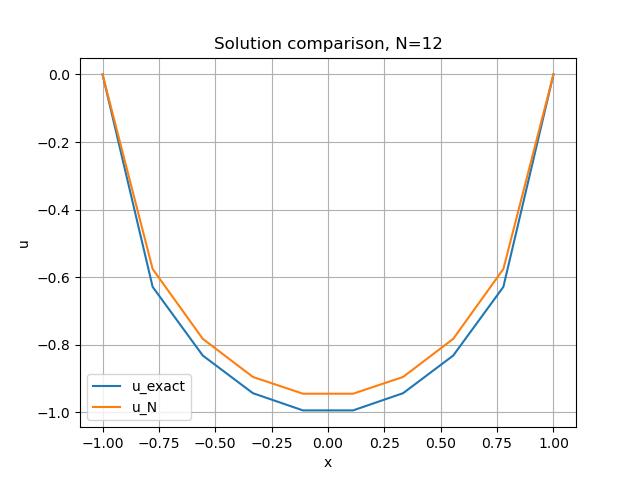}
		\caption{Sketches of $u$ and $u_N$ }
	\end{subfigure}\hfill
	\begin{subfigure}[t]{0.5\textwidth}
		\centering
		\includegraphics[width=\linewidth]{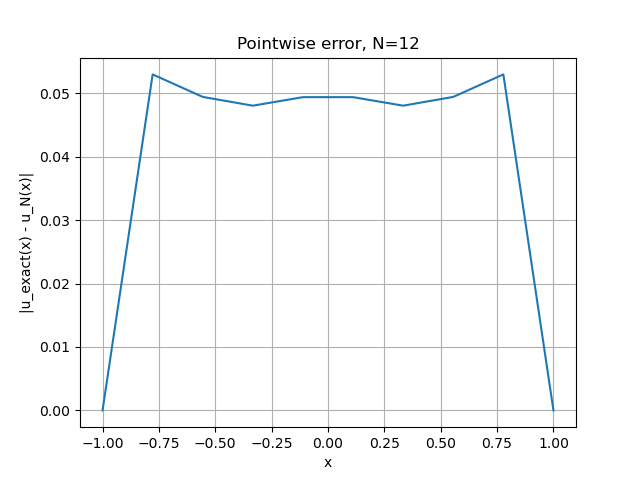}
		\caption{Pointwise error of $|u-u_N|$ }
	\end{subfigure}
	
	\caption{Solution profile and pointwise error for $N=12$. (left: true solution $u$ versus numerical solution $u_N$. right: $|u-u_N|$)}
\label{fig:example1_sol_err3}
\end{figure}

\begin{figure}[H]
\centering
	
	\begin{subfigure}[t]{0.5\textwidth}
		\centering
		\includegraphics[width=\linewidth]{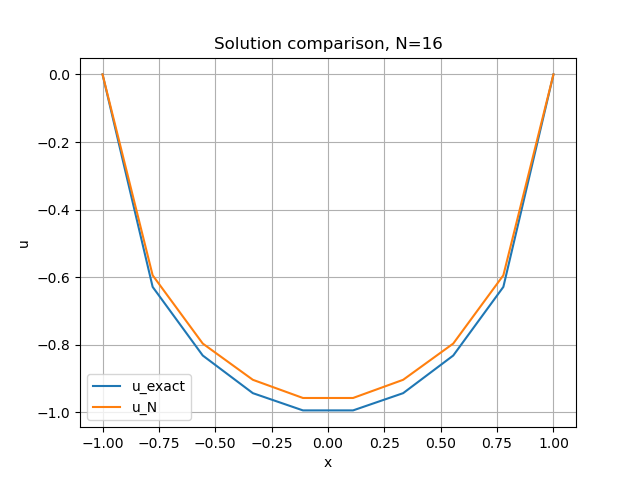}
		\caption{Sketches of $u$ and $u_N$ }
	\end{subfigure}\hfill
	\begin{subfigure}[t]{0.5\textwidth}
		\centering
		\includegraphics[width=\linewidth]{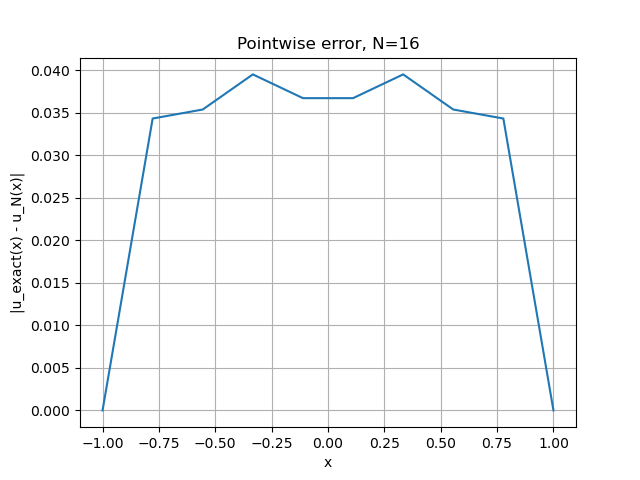}
		\caption{Pointwise error of $|u-u_N|$ }
	\end{subfigure}
	
	\caption{Solution profile and pointwise error for $N=16$. (left: true solution $u$ vs. numerical solution $u_N$. right: $|u-u_N|$)}
	\label{fig:example1_sol_err4}
\end{figure}

Figures \ref{fig:example1_convergence}-\ref{fig:example1_sol_err4} reveal that the approximation quality is relatively poor, with a visible discrepancy between the numerical and exact solutions even at $N=16$. This aligns with Table~\ref{tab:example1}, where the extremely large $L^\infty$ derivative errors and slow convergence rate ($\approx 0.2$) confirm that the boundary singularity severely limits the method's global accuracy (as in Figure \ref{fig:example1_convergence} (b)).

\textbf{Example 2.} To compare the efficiency of the proposed collocation schemes, we select an analytical solution that possesses only limited regularity, specifically, of order approximately one:
\[
  u = -(1 - x^{2})^{3/2}.
\]
And the corresponding right hand item is
\[
  f = 3 - 9x^{2},
\]

\begin{table}[H]
	\centering
	\caption{Errors, convergence rates and condition numbers for different $N$.}
	\begin{tabular}{|c|c|c|c|c|c|c|}
		\hline
		\(N\) & \(\|u-u_{N}\|_{L^\infty}\) & \(\|u'-u'_{N}\|_{L^\infty}\) & \(\|u-u_{N}\|_{L^2}\) & \(\|u'-u'_{N}\|_{L^2}\) & rate & cond(\(A\)) \\
		\hline
		4 & 1.8482e-02 & 3.4144e-02 & 2.0551e-02 & 1.3945e-01 & / & 6.8541e+00 \\
		8 & 2.4235e-03 & 7.0750e-03 & 2.7266e-03 & 3.5736e-02 & 1.9643 & 4.8501e+01 \\
		12 & 7.4183e-04 & 2.0189e-03 & 8.6331e-04 & 1.6528e-02 & 1.9010 & 1.4231e+02 \\
		16 & 3.1152e-04 & 9.7438e-04 & 3.7859e-04 & 9.5296e-03 & 1.9060 & 2.9647e+02 \\
		24 & 9.3095e-05 & 2.9933e-04 & 1.1714e-04 & 4.3571e-03 & 1.9235 & 8.0452e+02 \\
		\hline
	\end{tabular}
	
	\label{tab:example2}
\end{table}

Table \ref{tab:example2} presents the quantitative errors and convergence rates for Example 2. In contrast to the singular behaviors observed in Example 1, the derivative errors here are well-behaved and decrease monotonically with $N$. The convergence rate stabilizes at approximately $1.9$--$2.0$, indicating a stable algebraic convergence of second order. Although the improved regularity of the solution $u(x) = -(1-x^2)^{3/2}$ leads to higher accuracy compared to Example 1, the convergence remains algebraic, confirming that finite regularity continues to preclude spectral convergence.

\begin{figure}[H]
	\centering
	\includegraphics[width=0.5\textwidth]{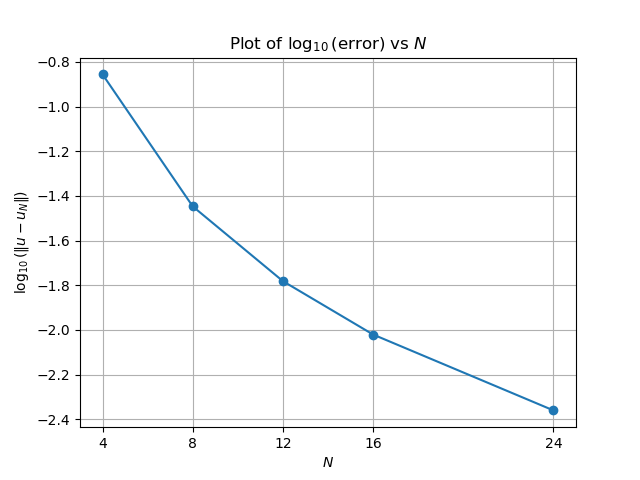}
	\caption{\(\log_{10}(\| u - u_{N}\|_{L^2})\) versus \(N\).}
	\label{fig:example2_convergence}
\end{figure}

\begin{figure}[H]
	\centering
	
	\begin{subfigure}[t]{0.5\textwidth}
		\centering
		\includegraphics[width=\linewidth]{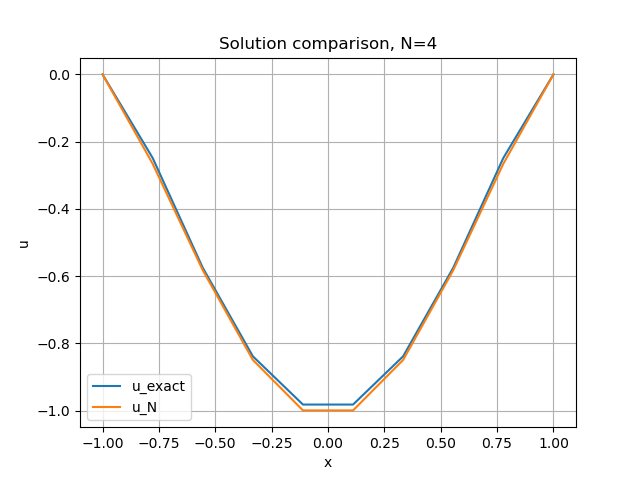}
		\caption{Sketches of $u$ and $u_N$ }
	\end{subfigure}\hfill
	\begin{subfigure}[t]{0.5\textwidth}
		\centering
		\includegraphics[width=\linewidth]{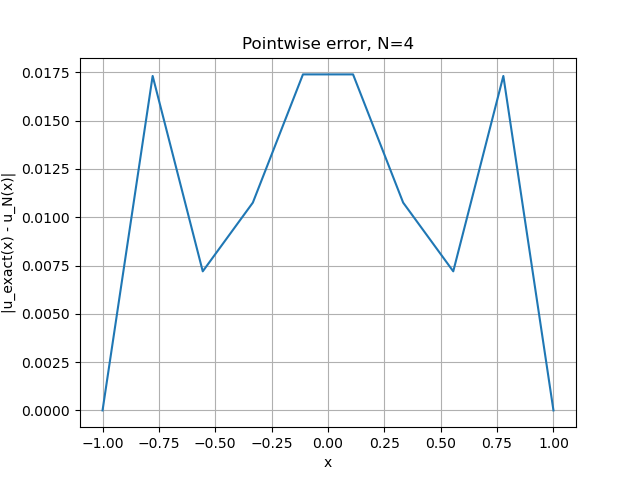}
		\caption{Pointwise error of $|u-u_N|$ }
	\end{subfigure}
	
	\caption{Solution profile and pointwise error for $N=4$. (left: true solution $u$ versus numerical solution $u_N$. right: $|u-u_N|$)}
	\label{fig:example2_sol_err1}
\end{figure}

\begin{figure}[H]
	\centering

	\begin{subfigure}[t]{0.5\textwidth}
		\centering
		\includegraphics[width=\linewidth]{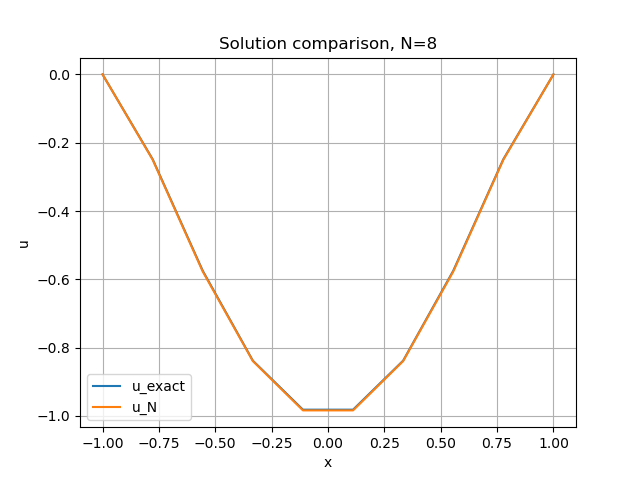}
		\caption{Sketches of $u$ and $u_N$ }
	\end{subfigure}\hfill
	\begin{subfigure}[t]{0.5\textwidth}
		\centering
		\includegraphics[width=\linewidth]{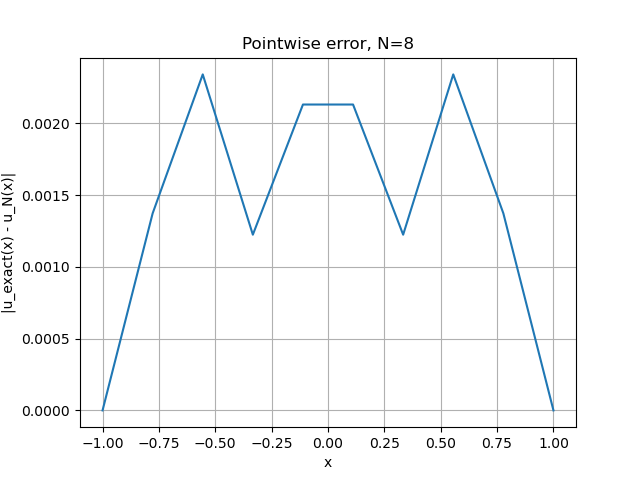}
		\caption{Pointwise error of $|u-u_N|$ }
	\end{subfigure}
	
	\caption{Solution profile and pointwise error for $N=8$. (left: true solution $u$ versus numerical solution $u_N$. right: $|u-u_N|$)}
	\label{fig:example2_sol_err2}
\end{figure}

\begin{figure}[H]
	\centering

	\begin{subfigure}[t]{0.5\textwidth}
		\centering
		\includegraphics[width=\linewidth]{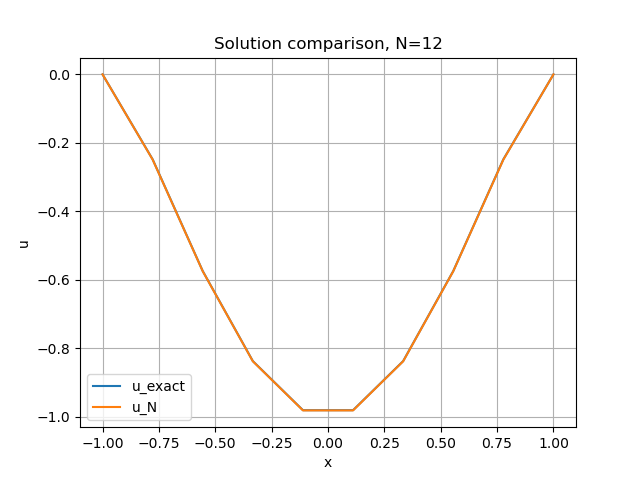}
		\caption{Sketches of $u$ and $u_N$ }
	\end{subfigure}\hfill
	\begin{subfigure}[t]{0.5\textwidth}
		\centering
		\includegraphics[width=\linewidth]{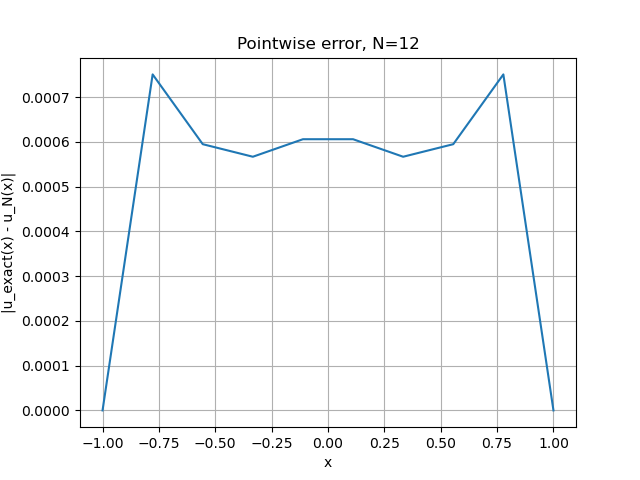}
		\caption{Pointwise error of $|u-u_N|$ }
	\end{subfigure}
	
	\caption{Solution profile and pointwise error for $N=12$. (left: true solution $u$ versus numerical solution $u_N$. right: $|u-u_N|$)}
	\label{fig:example2_sol_err3}
\end{figure}

\begin{figure}[H]
	\centering

	\begin{subfigure}[t]{0.5\textwidth}
		\centering
		\includegraphics[width=\linewidth]{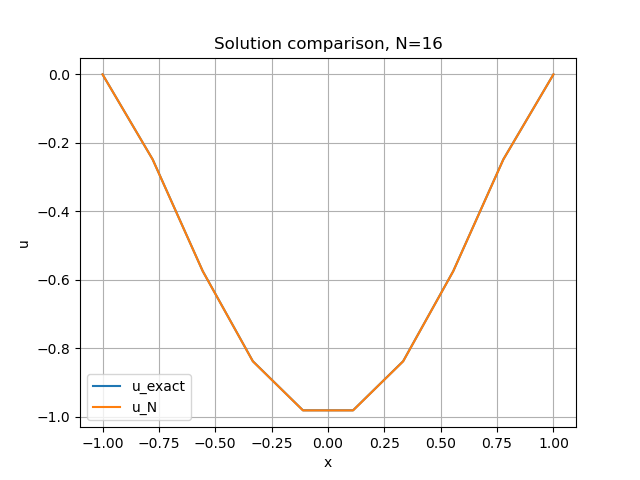}
		\caption{Sketches of $u$ and $u_N$ }
	\end{subfigure}\hfill
	\begin{subfigure}[t]{0.5\textwidth}
		\centering
		\includegraphics[width=\linewidth]{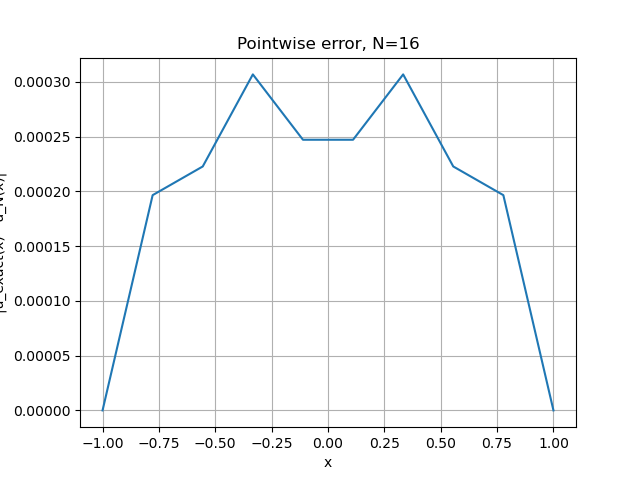}
		\caption{Pointwise error of $|u-u_N|$ }
	\end{subfigure}
	
	\caption{Solution profile and pointwise error for $N=16$. (left: true solution $u$ versus numerical solution $u_N$. right: $|u-u_N|$)}
	\label{fig:example2_sol_err4}
\end{figure}

Figure \ref{fig:example2_convergence}--\ref{fig:example2_sol_err4} displays the error decay trend, which follows a steady algebraic slope consistent with the results in Table~\ref{tab:example2}. The pointwise comparisons in Figures~\ref{fig:example2_sol_err1}--\ref{fig:example2_sol_err4} demonstrate a significant improvement in approximation quality compared to Example 1. The numerical solution $u_N$ closely matches the exact solution $u$ even for small $N$. Similar to the previous case, the pointwise errors vanish at the boundaries and exhibit a wave-like distribution within the domain, but with magnitudes reduced by approximately two orders compared to Example 1.

\textbf{Example 3.} A smooth solution is chosen to illustrate the convergence behavior of the proposed collocation schemes:
\[
  u = (1 - x^{2}) \sin(\pi x).
\]
And the corresponding right hand item can be calculated as
\[
 f = \frac{(4x^{2} - 2) - \pi^{2}(1 - x^{2})^{2}}{\sqrt{1 - x^{2}}} \sin(\pi x) - 5\pi x \sqrt{1 - x^{2}} \cos(\pi x),
\]

\begin{table}[H]
	\centering
		\caption{Errors, convergence rates and condition numbers for different $N$.}
	\begin{tabular}{|c|c|c|c|c|c|c|}
		\hline
		\(N\) & \(\|u-u_{N}\|_{L^\infty}\) & \(\|u'-u'_{N}\|_{L^\infty}\) & \(\|u-u_{N}\|_{L^2}\) & \(\|u'-u'_{N}\|_{L^2}\) & rate & cond(\(A\)) \\
		\hline
		4 & 4.5438e-01 & 4.6092e+00 & 3.4885e-01 & 2.0836e+00 & / & 6.8541e+00 \\
		8 & 3.3160e-03 & 1.3665e-01 & 2.2368e-03 & 3.2578e-02 & 5.9990 & 4.8501e+01 \\
		12 & 2.8161e-06 & 2.4333e-04 & 1.8090e-06 & 4.0908e-05 & 13.2262 & 1.4231e+02 \\
		16 & 5.5465e-10 & 8.1694e-08 & 3.4705e-10 & 1.0711e-08 & 21.5364 & 2.9647e+02 \\
		24 & 4.2188e-15 & 2.8156e-10 & 3.0755e-15 & 1.0553e-14 & 31.8520 & 8.0452e+02 \\
		\hline
	\end{tabular}

	\label{tab:example3}
\end{table}

Table~\ref{tab:example3} reveals a dramatic change in convergence behaviors compared to the previous non-smooth examples. For a smooth solution $u(x) = (1-x^2)\sin(\pi x)$, the errors decay rapidly. Notably, as $N$ increases from 4 to 24, the $L^2$-error drops from $O(10^{-1})$ to $O(10^{-15})$, essentially reaching machine precision.
Furthermore, the computed convergence rates are not constant but increase significantly with $N$ (from $\approx 6.0$ to $\approx 31.9$). This increasing rate is a hallmark of spectral convergence (or exponential convergence), confirming that the proposed Chebyshev collocation method is highly efficient for smooth problems.

 \begin{figure}[H]
	 \centering
	 \includegraphics[width=0.5\textwidth]{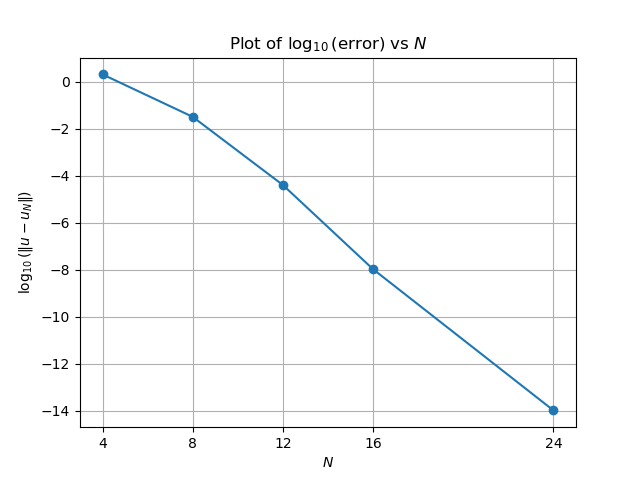}
	 \caption{\(\log_{10}(\| u - u_{N}\|_{L^2})\) versus \(N\).}
	 \label{fig:example3_convergence}
	 \end{figure}

\begin{figure}[H]
	\centering
	
	\begin{subfigure}[t]{0.5\textwidth}
		\centering
		\includegraphics[width=\linewidth]{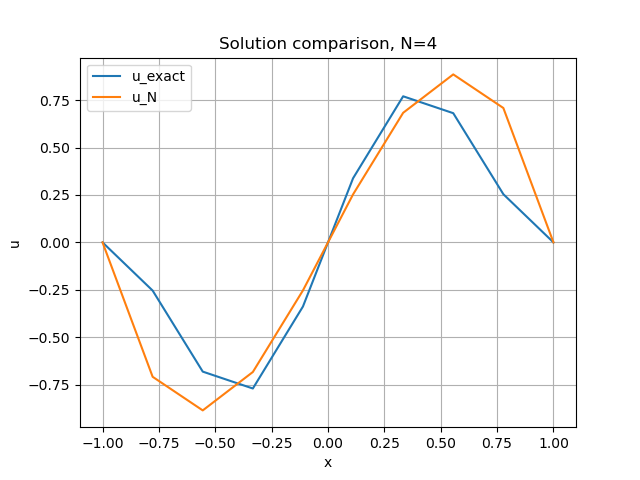}
		\caption{Sketches of $u$ and $u_N$ }
	\end{subfigure}\hfill
	\begin{subfigure}[t]{0.5\textwidth}
		\centering
		\includegraphics[width=\linewidth]{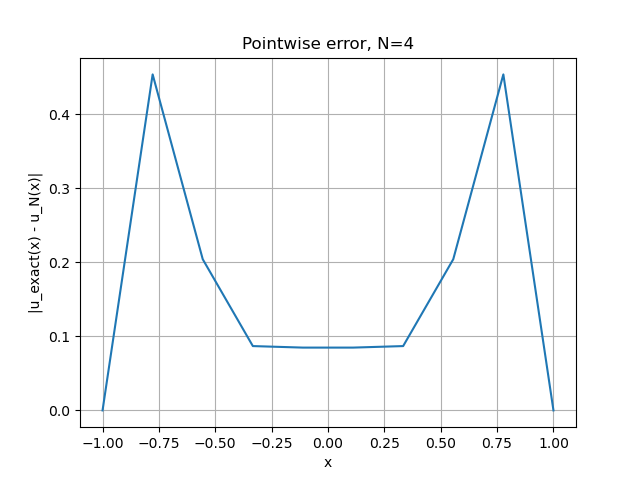}
		\caption{Pointwise error of $|u-u_N|$ }
	\end{subfigure}
	
	\caption{Solution profile and pointwise error for $N=4$. (left: true solution $u$ versus numerical solution $u_N$. right: $|u-u_N|$)}
\label{fig:example3_sol_err1}
\end{figure}

\begin{figure}[H]
\centering
	
	\begin{subfigure}[t]{0.5\textwidth}
		\centering
		\includegraphics[width=\linewidth]{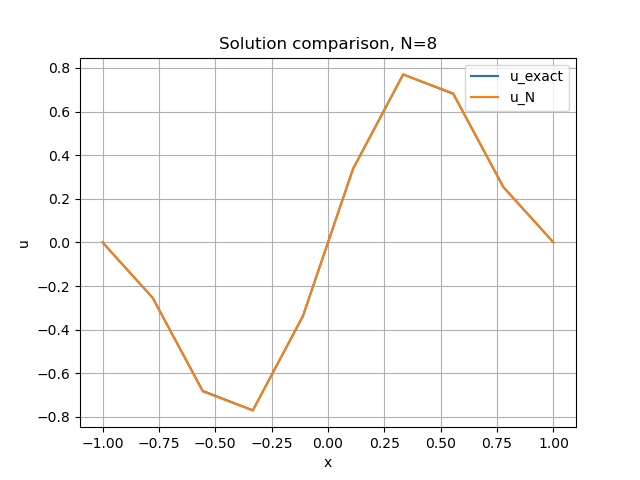}
		\caption{Sketches of $u$ and $u_N$ }
	\end{subfigure}\hfill
	\begin{subfigure}[t]{0.5\textwidth}
		\centering
		\includegraphics[width=\linewidth]{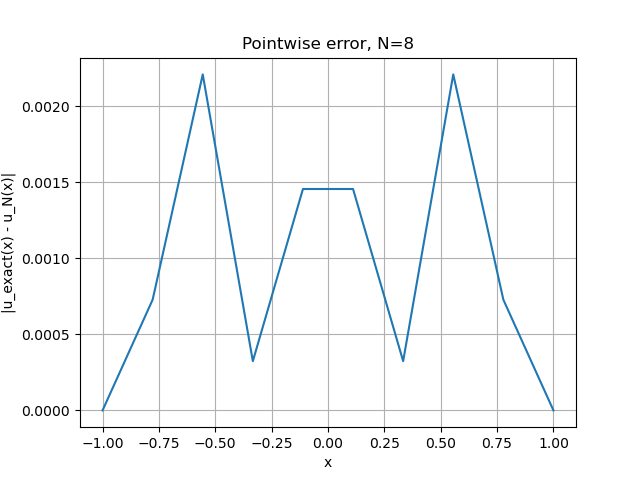}
		\caption{Pointwise error of $|u-u_N|$ }
	\end{subfigure}
	
	\caption{Solution profile and pointwise error for $N=8$. (left: true solution $u$ versus numerical solution $u_N$. right: $|u-u_N|$)}
\label{fig:example3_sol_err2}
\end{figure}

\begin{figure}[H]
\centering
	
	\begin{subfigure}[t]{0.5\textwidth}
		\centering
		\includegraphics[width=\linewidth]{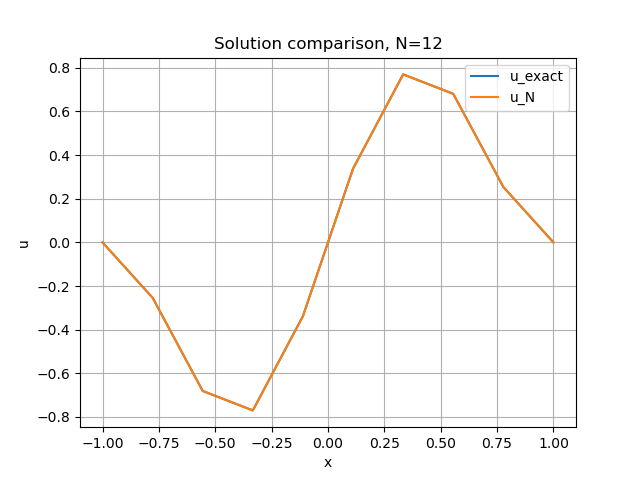}
		\caption{Sketches of $u$ and $u_N$ }
	\end{subfigure}\hfill
	\begin{subfigure}[t]{0.5\textwidth}
		\centering
		\includegraphics[width=\linewidth]{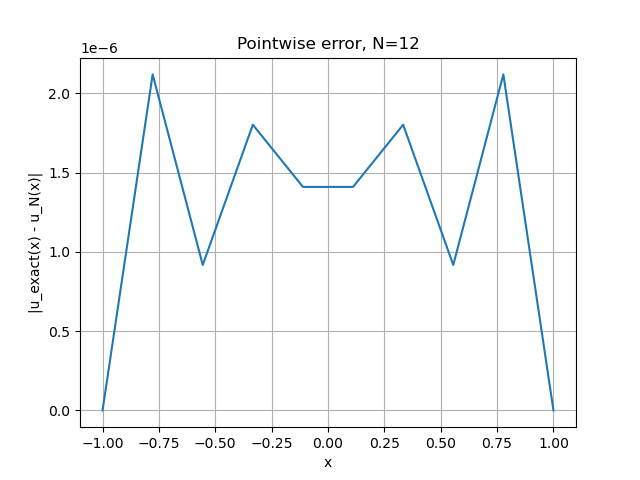}
		\caption{Pointwise error of $|u-u_N|$ }
	\end{subfigure}
	
	\caption{Solution profile and pointwise error for $N=12$. (left: true solution $u$ versus numerical solution $u_N$. right: $|u-u_N|$)}
\label{fig:example3_sol_err3}
\end{figure}

\begin{figure}[H]
\centering
	
	\begin{subfigure}[t]{0.5\textwidth}
		\centering
		\includegraphics[width=\linewidth]{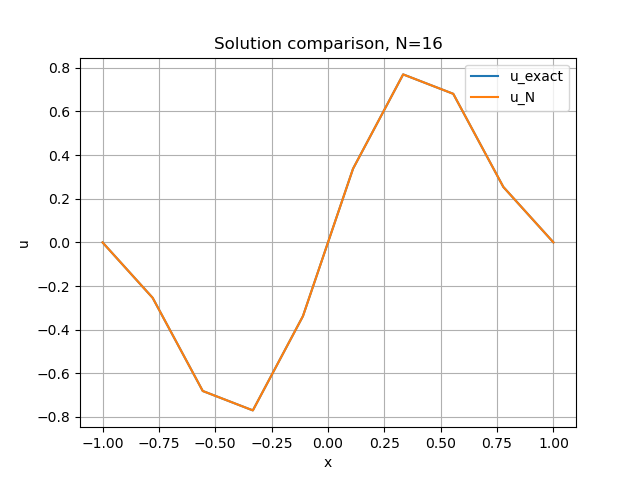}
		\caption{Sketches of $u$ and $u_N$ }
	\end{subfigure}\hfill
	\begin{subfigure}[t]{0.5\textwidth}
		\centering
		\includegraphics[width=\linewidth]{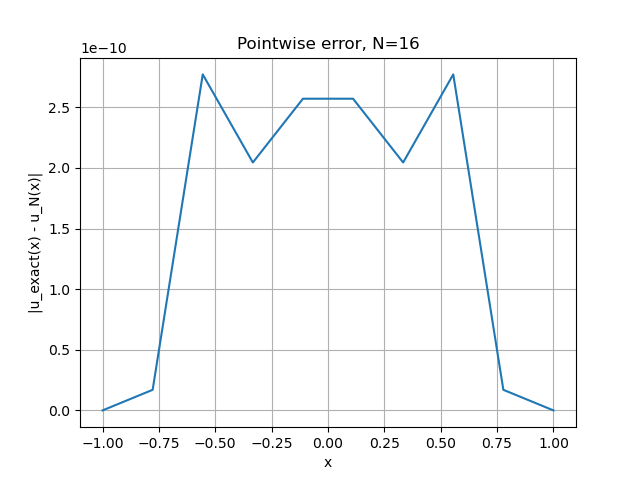}
		\caption{Pointwise error of $|u-u_N|$ }
	\end{subfigure}
	
	\caption{Solution profile and pointwise error for $N=16$. (left: true solution $u$ versus  numerical solution $u_N$. right: $|u-u_N|$)}
	\label{fig:example3_sol_err4}
\end{figure}

Figure \ref{fig:example3_convergence} presents the semi-logarithmic plot of the $L^2$-error versus $N$. The curve exhibits a clear linear decay, which visually confirms the exponential convergence property predicted by spectral theory.
Figures~\ref{fig:example3_sol_err1}--\ref{fig:example3_sol_err4} further illustrate this rapid convergence. While a slight discrepancy is visible at $N=4$ (Figure~\ref{fig:example3_sol_err1}), the numerical solution $u_N$ becomes indistinguishable from the exact solution $u$ for $N \ge 8$. The pointwise error plots (right panels) show a dramatic reduction in magnitude as $N$ increases, dropping from $O(10^{-1})$ at $N=4$ to $O(10^{-10})$ at $N=16$, effectively capturing the smooth solution with high precision.

\textbf{Example 4.} In this numerical experiment, we consider an exact solution that is sufficiently smooth:
\[
u = e^{x^{2} - 1} - 1,
\]
and the smooth corresponding right hand item:
\[
f = \frac{2 e^{x^{2} - 1} (1 - 2x^{4})}{\sqrt{1 - x^{2}}}.
\]

\begin{table}[H]
	\centering
	\caption{Errors, convergence rates and condition numbers for different $N$.}
	\begin{tabular}{|c|c|c|c|c|c|c|}
		\hline
		\(N\) & \(\|u-u_{N}\|_{L^\infty}\) & \(\|u'-u'_{N}\|_{L^\infty}\) & \(\|u-u_{N}\|_{L^2}\) & \(\|u'-u'_{N}\|_{L^2}\) & rate & cond(\(A\)) \\
		\hline
		4 & 6.5962e-03 & 1.2441e-01 & 5.6563e-03 & 3.8102e-02 & / & 6.8541e+00 \\
		8 & 2.0828e-05 & 1.0639e-03 & 1.7878e-05 & 2.1259e-04 & 7.4856 & 4.8501e+01 \\
		12 & 3.0506e-08 & 3.0241e-06 & 2.6194e-08 & 4.5429e-07 & 12.8474 & 1.4231e+02 \\
		16 & 2.6299e-11 & 4.2894e-09 & 2.2591e-11 & 5.1765e-10 & 18.6477 & 2.9647e+02 \\
		24 & 2.9976e-15 & 2.0650e-14 & 2.9345e-15 & 5.4655e-15 & 26.3087 & 8.0452e+02 \\
		\hline
	\end{tabular}
	
	\label{tab:example4}
\end{table}

Table~\ref{tab:example4} corroborates the findings from Example 3 using an analytic solution. The results demonstrate excellent accuracy even with a small number of collocation points. For instance, the $L^2$-error of the derivative decreases rapidly from $3.81 \times 10^{-2}$ at $N=4$ to $5.47 \times 10^{-15}$ at $N=24$. 
Similar to the previous smooth case, the convergence rates increase monotonically with $N$, reaching approximately $26.3$ at $N=24$. This behavior is fully consistent with the expected spectral convergence for analytic functions. Meanwhile, the condition number of stiffness matrices grows algebraically but remains within a manageable range, allowing the method to achieve machine-precision accuracy without stability issues.

\begin{figure}[h!]
	\centering
	\includegraphics[width=0.49\textwidth]{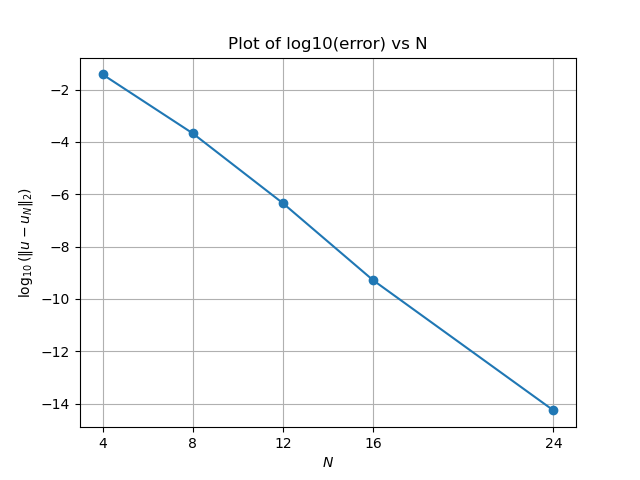}
	\caption{Convergence results depicted with \(\log_{10}(\| u - u_{N}\|_{L^2})\) versus \(N\). }
	\label{fig:example4_convergence}
\end{figure}

Then the approximately linear decay of the curves over a range of \(N\) demonstrates that our proposed collocation method achieves exponential convergence for analytic solutions.

\begin{figure}[H]
	\centering
	
	\begin{subfigure}[t]{0.5\textwidth}
		\centering
		\includegraphics[width=\linewidth]{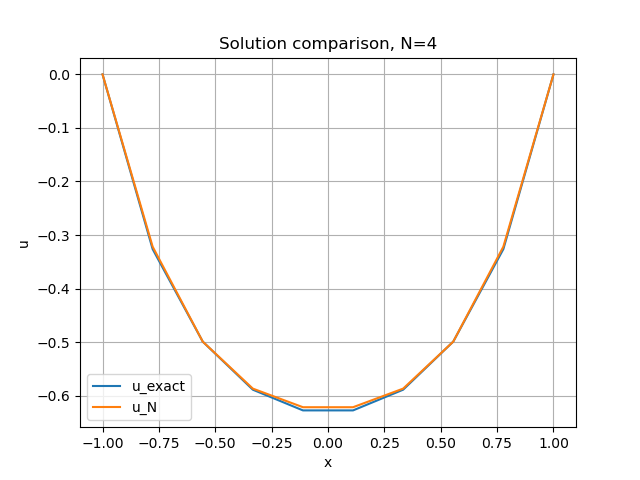}
		\caption{Sketches of $u$ and $u_N$}
	\end{subfigure}\hfill
	\begin{subfigure}[t]{0.5\textwidth}
		\centering
		\includegraphics[width=\linewidth]{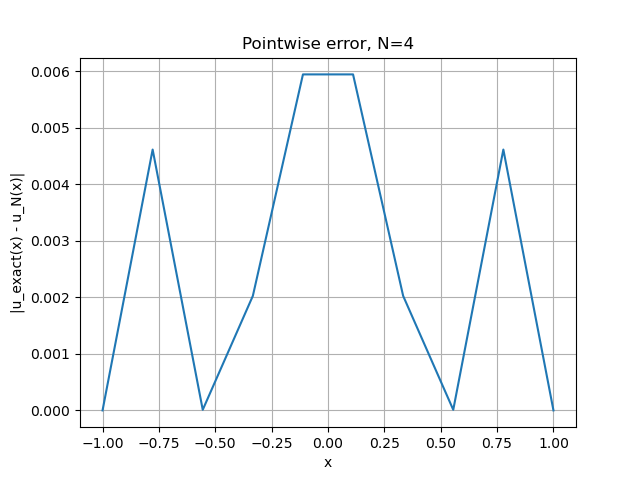}
		\caption{Pointwise error of $|u-u_N|$}
	\end{subfigure}
	
	\caption{Solution profile and pointwise errors for $N=4$. (left: true solution $u$ versus numerical solution $u_N$. right: $|u-u_N|$)}
	\label{fig:example4_sol_err1}
\end{figure}

\begin{figure}[H]
	\centering
	\begin{subfigure}[t]{0.5\textwidth}
		\centering
		\includegraphics[width=\linewidth]{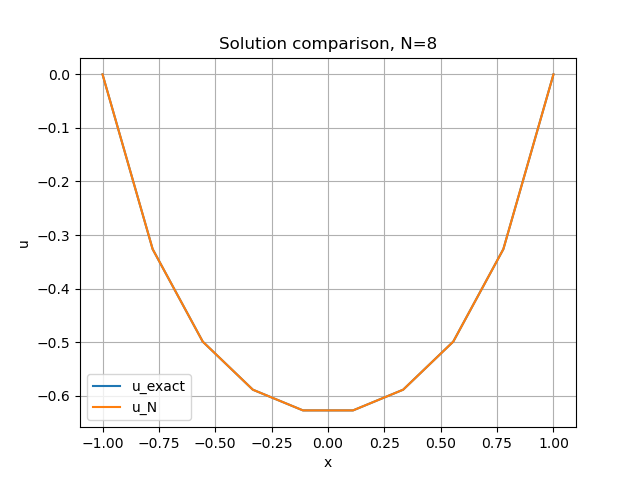}
		\caption{Sketches of $u$ and $u_N$}
	\end{subfigure}\hfill
	\begin{subfigure}[t]{0.5\textwidth}
		\centering
		\includegraphics[width=\linewidth]{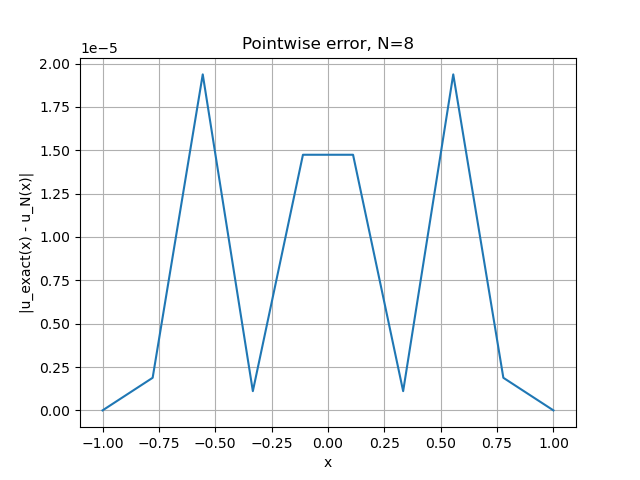}
		\caption{Pointwise error of $|u-u_N|$}
	\end{subfigure}
	
	\caption{Solution profile and pointwise error for $N=8$. (left: true solution $u$ versus numerical solution $u_N$. right: $|u-u_N|$)}
	\label{fig:example4_sol_err2}
\end{figure}

\begin{figure}[H]
	\centering
	\begin{subfigure}[t]{0.5\textwidth}
		\centering
		\includegraphics[width=\linewidth]{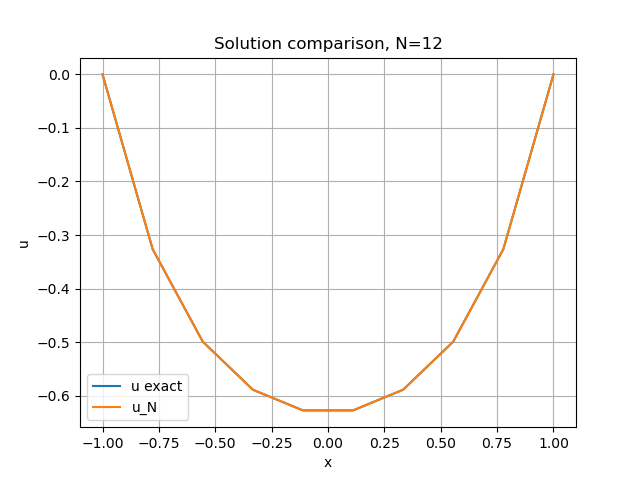}
		\caption{Sketches of $u$ and $u_N$}
	\end{subfigure}\hfill
	\begin{subfigure}[t]{0.5\textwidth}
		\centering
		\includegraphics[width=\linewidth]{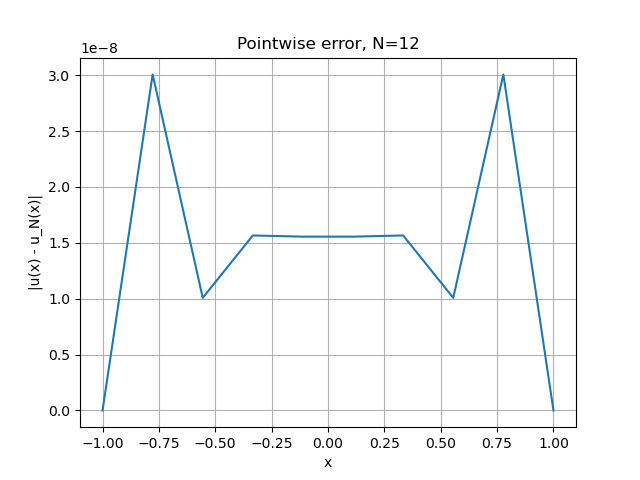}
		\caption{Pointwise error of $|u-u_N|$}
	\end{subfigure}
	
	\caption{Solution profile and pointwise error for $N=12$. (left: true solution $u$ versus numerical solution $u_N$. right: $|u-u_N|$)}
	\label{fig:example4_sol_err3}
\end{figure}

\begin{figure}[H]
	\centering
	\begin{subfigure}[t]{0.5\textwidth}
		\centering
		\includegraphics[width=\linewidth]{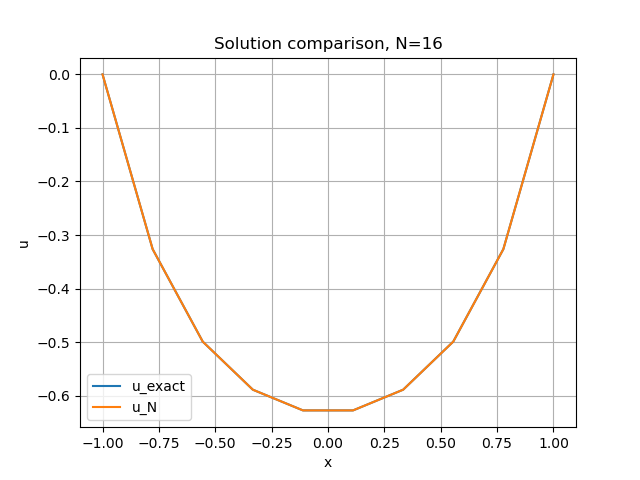}
		\caption{Sketches of $u$ and $u_N$}
	\end{subfigure}\hfill
	\begin{subfigure}[t]{0.5\textwidth}
		\centering
		\includegraphics[width=\linewidth]{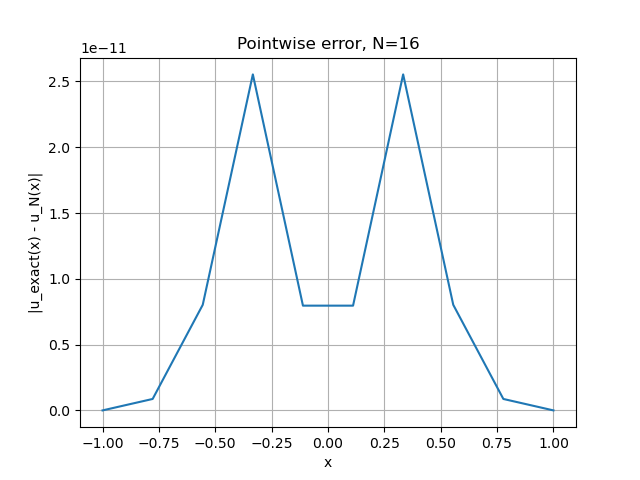}
		\caption{Pointwise error of $|u-u_N|$}
	\end{subfigure}
	
	\caption{Solution profile and pointwise error for $N=16$. (left: true solution $u$ versus numerical solution $u_N$. right: $|u-u_N|$)}
	\label{fig:example4_sol_err4}
\end{figure}

Figure~\ref{fig:example4_convergence} illustrates the convergence results on a semi-logarithmic scale. The strictly linear decay of $\log_{10}(\|u - u_N\|_{L^2})$ versus $N$ provides compelling visual evidence of exponential convergence.
The solution profiles and pointwise errors in Figures~\ref{fig:example4_sol_err1}--\ref{fig:example4_sol_err4} further substantiate the high efficiency of our proposed  method. The numerical approximation $u_N$ captures the analytic solution $u$ with remarkable accuracy. As shown in the right panels, the pointwise error decreases as $N$ increases from $4$ to $16$, with the maximum error uniformly bounded within the interior of the domain.

\section{Conclusion}

We investigate a second-order differential problem using a novel collocation scheme. The basis functions are constructed using Chebyshev polynomials of the first kind, which ensures the sparsity of the stiffness matrices. Error estimates are expressed in two different norms by employing the Riesz theorem. Numerical tests are provided to confirm the theoretical results, demonstrating that the proposed collocation method can deliver highly accurate numerical solutions.

The main contributions of this paper are summarized as follows:
\begin{itemize}
	\item Study an elliptic-type boundary value problem on $I=(-1,1)$ with the endpoint-degenerate coefficient
	$\omega(x)=\sqrt{1-x^2}$ and homogeneous Dirichlet boundary conditions.
	The model is formulated in a natural weighted variational setting, and well-posedness of the weak solution is established by proving continuity and coercivity of the associated bilinear form, based on a weighted Poincar\'e-type inequality and the Lax--Milgram theorem.
	\item Propose a Chebyshev-based discretization with boundary-satisfying basis functions,and derive an explicit and sparse algebraic system by combining the operator identity with Chebyshev orthogonality. 
	\item Introduce an appropriate orthogonal (Ritz-type) projection in the weighted space and derive a priori error estimates in two norms,namely the weighted energy norm and an $L^2$-type norm,thereby quantifying the convergence behavior of the proposed collocation approximation.
	\item Perform comprehensive numerical experiments for both low-regularity and smooth solutions.
	The results include errors, convergence rates, condition numbers, and $N$-$\log$ curves,which corroborate the theoretical error analysis and demonstrate the performance of the proposed method.
\end{itemize}

In the future, we plan to extend similar techniques to other typical differential problems, including degenerate coefficients and high dimensional domains.
 
\section*{Conflict of Interest} The authors declare that they have no conflict of interest.



\end{document}